\documentclass[12pt,a4paper]{article}

\usepackage[T1]{fontenc}
\usepackage[utf8]{inputenc}

\usepackage{amsmath, amssymb, amsthm}

\usepackage{graphicx}
\usepackage{geometry}
\usepackage{indentfirst}

\usepackage{hyperref}
\usepackage{natbib}

\title{\textbf{Fractional Grönwall--Wendroff Inequalities for Implicit Systems with Distributed Memory}}

\author{
	Rômulo Damasclin Chaves dos Santos, Ph.D. \vspace{5pt}\\
	Postdoctoral Researcher in Computational Modeling\\
	State University of Santa Cruz, Bahia, Brazil\\ \& \\
	Postdoctoral Researcher in Physics and Thermohydraulics\\
	Institute for Energy and Nuclear Research, São Paulo, Brazil
}

\date{March 2024}

\newtheorem{theorem}{Theorem}[section]
\newtheorem{lemma}[theorem]{Lemma}

\newtheorem{definition}[theorem]{Definition}
\newtheorem{remark}[theorem]{Remark}

\numberwithin{equation}{section}

\begin{document}
	
	\maketitle

	\begin{abstract}
		This work establishes a comprehensive analytical framework for studying implicit fractional differential systems with distributed memory and time delays. We develop novel fractional integral inequalities of Grönwall--Wendroff type that are specifically adapted to handle multivariate functions with singular kernels and implicit dependencies. These inequalities provide essential a priori estimates for analyzing complex memory-dependent systems. Building upon these results, we prove general existence and uniqueness theorems for implicit fractional differential equations using fixed-point theory in appropriately weighted Banach spaces. Furthermore, we establish Ulam--Hyers stability criteria, demonstrating that small perturbations in the governing equations lead to proportionally small deviations in solutions. The theoretical advances are applied to the fractional FitzHugh--Nagumo model with delay (FHN-$\alpha$-$\tau$), a neurodynamical system exhibiting both subdiffusive memory and discrete time delays. Our analysis yields rigorous conditions for the existence of limit cycles corresponding to action potentials, along with asymptotic stability criteria. The results reveal how fractional order $\alpha$ and delay $\tau$ jointly modulate neural excitability thresholds and dynamic regime transitions. This work bridges fundamental fractional calculus theory with applications in mathematical neuroscience, providing analytical tools for systems where present evolution depends non-trivially on historical states and intrinsic fractional rates of change.
		\newline
		\newline
		\textbf{Keywords:} Fractional calculus, Distributed memory systems, Grönwall–Wendroff inequalities, Ulam–Hyers stability, FitzHugh–Nagumo model.
		\newline
		\newline
		\textbf{MSC classes:} 34A08, 34K37, 45G10, 92C20.
	\end{abstract}
	
	\section{Introduction}
	Dynamical systems with long-range temporal dependencies, often referred to as systems with memory or nonlocal temporal behavior, are pervasive across scientific disciplines ranging from neurodynamics to material science. Unlike conventional ordinary differential equations, these systems exhibit evolution that depends intrinsically on their historical states, not merely on instantaneous conditions. Fractional calculus has emerged as a natural mathematical framework for such phenomena, replacing integer-order derivatives with integro-differential operators that weigh past states according to singular, power-law kernels \cite{podlubny1998,tarasov2011}. 
	
	In this work, we consider a comprehensive class of implicit fractional differential systems with distributed memory, formally described by
	\begin{equation}
		^{C}\mathcal{D}_{0}^{\alpha} \mathbf{x}(t) = \mathbf{F}\left(t, \mathbf{x}(t), ^{C}\mathcal{D}_{0}^{\alpha} \mathbf{x}(t), \mathcal{I}[\mathbf{x}](t)\right), \quad t \in [0, T],
		\label{eq:general_system}
	\end{equation}
	subject to the initial condition $\mathbf{x}(0) = \mathbf{x}_0$. Here, $\mathbf{x}(t) \in \mathbb{R}^n$ is the state vector, $^{C}\mathcal{D}_{0}^{\alpha}$ denotes the vector-valued Caputo fractional derivative of order $\alpha \in (0,1]^n$ \cite{kilbas2006}, and $\mathcal{I}[\mathbf{x}](t)$ represents a generic memory operator capable of incorporating fixed time delays, distributed integrals, or fractional integral terms. This implicit formulation, where the fractional derivative appears on both sides of the equation, captures self-regulatory mechanisms and intrinsic feedback often present in biological, physical, and engineering systems \cite{shabbir2021,abdalla2026}. 
	
	The analytical study of such systems presents significant challenges, particularly in establishing well-posedness, stability, and qualitative behavior under the combined effects of fractional memory, implicit structure, and distributed delays. While fractional versions of classical tools like the Grönwall inequality have been developed for simpler settings \cite{cheung2008,yang2025}, a unified theory for implicit multivariate systems with compound memory remains largely unexplored. Recent interest in Ulam--Hyers stability for fractional equations \cite{dilna2024,elsayed2024} further motivates a rigorous investigation of robustness in this generalized context.
	
	This work aims to bridge this gap by developing a coherent analytical framework based on novel fractional integral inequalities, fixed-point theory in weighted Banach spaces, and qualitative analysis techniques. We derive extended Grönwall--Wendroff-type inequalities tailored to implicit systems, prove general existence and uniqueness theorems, establish Ulam--Hyers stability criteria, and apply the theory to the fractional FitzHugh--Nagumo model with delay, a canonical neurodynamical system exhibiting both subdiffusive memory and discrete propagation delays \cite{area2016}. Our results provide rigorous foundations for modeling and analyzing complex systems where history dependence and self-regulation play fundamental roles.
	
	\section{Fractional Wendroff-Type Integral Inequalities}
	\label{sec:inequalities}
	
	In this section we establish a class of integral inequalities of Grönwall--Wendroff type adapted to fractional-order systems with memory and time delay. Such inequalities play a central role in the qualitative analysis of fractional functional differential equations, providing a priori bounds, stability estimates, and continuous dependence results. The presence of weakly singular kernels and delayed arguments requires a careful extension of classical techniques to the fractional setting.
	
	\begin{definition}
		Let $k: [0,T] \times [0,T] \to \mathbb{R}_+$ be a weakly singular kernel of the form
		\begin{equation}
			k(t,s) = (t-s)^{\beta-1} g(t,s), \quad \beta > 0,
			\label{eq:singular_kernel}
		\end{equation}
		where $g$ is continuous and bounded on $[0,T]\times[0,T]$. We define the associated fractional integral operator
		\begin{equation}
			(\mathcal{K}u)(t) = \int_0^t k(t,s)\,\phi\bigl(s,u(s),u(s-\tau)\bigr)\,ds,
			\label{eq:fractional_operator}
		\end{equation}
		where $\phi$ is a continuous nonlinear function and $\tau>0$ denotes a fixed time delay.
	\end{definition}
	
	The kernel \eqref{eq:singular_kernel} naturally generalizes the Riemann--Liouville fractional integral kernel and allows the inclusion of variable coefficients, thereby covering a broad class of nonlocal memory effects.
	
	\begin{theorem}[Multivariate Fractional Grönwall--Wendroff Inequality]
		\label{thm:main_ineq}
		Let $u,f \in C([0,T],\mathbb{R}^n)$ with $u(t)\ge 0$ and $f(t)\ge 0$ componentwise. Assume that
		\begin{equation}
			u(t) \le f(t) + \int_0^t (t-s)^{\alpha-1} A(s)u(s)\,ds + \int_0^t (t-s)^{\beta-1} B(s)u(s-\tau)\,ds,
			\label{eq:inequality_form}
		\end{equation}
		for all $t\in[0,T]$, where $\alpha,\beta\in(0,1]$, $A,B\in L^\infty([0,T],\mathbb{R}^{n\times n})$, and $u(t)=\phi(t)$ for $t\in[-\tau,0]$. Then there exists a constant $M>0$, depending only on $\alpha,\beta,\|A\|_\infty,\|B\|_\infty$ and $T$, such that
		\begin{equation}
			u(t) \le M\Bigl(\|\phi\|_{[-\tau,0]}+\sup_{s\in[0,t]}f(s)\Bigr), \qquad \forall\, t\in[0,T].
			\label{eq:inequality_result}
		\end{equation}
	\end{theorem}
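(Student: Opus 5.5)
The plan is to reduce \eqref{eq:inequality_form} to a scalar Volterra inequality with a single weakly singular kernel and a nondecreasing forcing term, and then apply the Henry--Grönwall lemma for kernels of the form $(t-s)^{\gamma-1}$. The first step removes the matrix structure. Since $u\ge 0$ componentwise, we have $A(s)u(s)\le |A(s)|\,u(s)$, where $|A|$ is the entrywise absolute value, and similarly for $B$. Set $v(t)=\max_i u_i(t)$ for $t\in[-\tau,T]$, $a=\|A\|_\infty$ and $b=\|B\|_\infty$, with the norm taken as the maximal row sum. Then
\[
v(t)\le \|f\|_{[0,t]}+a\int_0^t (t-s)^{\alpha-1}v(s)\,ds+b\int_0^t(t-s)^{\beta-1}v(s-\tau)\,ds .
\]
Here $\|f\|_{[0,t]}$ means $\sup_{s\in[0,t]}\max_i f_i(s)$, which is how I read the sup in \eqref{eq:inequality_result}. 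I read $\|\phi\|_{[-\tau,0]}$ as $\sup_{[-\tau,0]}\max_i\phi_i$, and I use that $\phi\ge 0$, as implicit in $u\ge0$.

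The second step handles the delay and passes to a monotone envelope. Let $V(t)=\sup_{0\le r\le t}v(r)$, which is nondecreasing. For $s<\tau$ we have $v(s-\tau)\le\|\phi\|_{[-\tau,0]}$, and for $s\ge\tau$ we have $v(s-\tau)\le V(s)$. Hence the delay integral is at most
\[
\frac{bT^\beta}{\beta}\|\phi\|_{[-\tau,0]}+b\int_0^t (t-s)^{\beta-1}V(s)\,ds .
\]
To take the supremum over $r\le t$ on the right-hand side, I use the substitution $s=r-\sigma$:
\[
\int_0^r (r-s)^{\alpha-1}V(s)\,ds=\int_0^r\sigma^{\alpha-1}V(r-\sigma)\,d\sigma\le\int_0^t\sigma^{\alpha-1}V(t-\sigma)\,d\sigma .
\]
This holds because $V$ is nondecreasing and the integrand is nonnegative, and the same argument applies to the $\beta$-integral. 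Next put $\gamma=\min(\alpha,\beta)$ and use $(t-s)^{\alpha-1}\le T^{\alpha-\gamma}(t-s)^{\gamma-1}$, and likewise for $\beta$. This gives
\[
V(t)\le C_0+c\int_0^t (t-s)^{\gamma-1}V(s)\,ds ,
\]
with $C_0=\bigl(1+bT^\beta/\beta\bigr)\bigl(\|\phi\|_{[-\tau,0]}+\|f\|_{[0,t]}\bigr)$ and $c=aT^{\alpha-\gamma}+bT^{\beta-\gamma}$. Since $C_0$ is nondecreasing in $t$, I may freeze it at a fixed $t$ when bounding $V$ on $[0,t]$.

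The third step, which I expect to be the main technical point, is the singular-kernel Grönwall lemma itself. I would prove it directly by iteration. Let $(\mathcal{B}w)(t)=c\int_0^t(t-s)^{\gamma-1}w(s)\,ds$; this operator is positive and maps $C([0,t])$ into itself. By induction, using the Beta integral,
\[
(\mathcal{B}^n w)(t)=\int_0^t \frac{(c\Gamma(\gamma))^n}{\Gamma(n\gamma)}(t-s)^{n\gamma-1}w(s)\,ds .
\]
The factor $(c\Gamma(\gamma))^nT^{n\gamma}/\Gamma(n\gamma+1)$ is summable and tends to $0$, so iterating $V\le C_0+\mathcal{B}V$ gives $\mathcal{B}^nV\to0$ uniformly, since $V$ is bounded. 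The result is
\[
V(t)\le C_0\sum_{k\ge0}\frac{(c\Gamma(\gamma)t^\gamma)^k}{\Gamma(k\gamma+1)}=C_0\,E_\gamma\bigl(c\Gamma(\gamma)t^\gamma\bigr).
\]
The care needed here is in the induction on the kernels and in justifying the exchange of the iteration with positivity; this is standard but must be written out, because the cited fractional Grönwall results are not stated earlier in the paper. Finally, $u_i(t)\le V(t)$ yields \eqref{eq:inequality_result} with
\[
M=\Bigl(1+\frac{bT^\beta}{\beta}\Bigr)E_\gamma\bigl(c\Gamma(\gamma)T^\gamma\bigr),
\]
which depends only on $\alpha,\beta,\|A\|_\infty,\|B\|_\infty$ and $T$, as claimed.
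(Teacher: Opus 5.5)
Your proof is correct, and it takes a genuinely different route from the paper's. The paper also passes to the running supremum $v(t)=\sup_{[0,t]}u$. It then uses the crude bound $v(s)\le v(t)$ inside both integrals, which collapses the inequality to $v(t)\le\|f\|_\infty+\theta(T)\,v(t)$ with $\theta(T)=\|A\|_\infty T^\alpha/\alpha+\|B\|_\infty T^\beta/\beta$. It absorbs the last term when $\theta(T)<1$ and only asserts that arbitrary $T$ follows by partitioning $[0,T]$ and iterating.

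You instead keep the convolution structure: your substitution $s=r-\sigma$ is what lets the supremum pass through the integral. You then merge the two kernels via $\gamma=\min(\alpha,\beta)$ and sum the Neumann series of $\mathcal{B}$. This gives the explicit constant $M=(1+bT^\beta/\beta)\,E_\gamma\bigl(c\,\Gamma(\gamma)T^\gamma\bigr)$ on the whole interval at once. The paper's route avoids the Beta-integral induction, but its constant is implicit and grows geometrically in the number of subintervals. Moreover, a rigorous stepping argument with a singular kernel must carry the memory of earlier subintervals into the forcing term, and the paper leaves this unwritten.

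Your splitting of the delay integral at $s=\tau$ also repairs a real omission. The paper bounds $u(s-\tau)$ by $v(s)$ even for $s<\tau$, so $\|\phi\|_{[-\tau,0]}$ never enters its estimate, although it appears in the conclusion.

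A minor point: $\phi\ge0$ is not implied by $u\ge0$ on $[0,T]$, but you do not need it. Componentwise $B(s)\phi(s-\tau)\le|B(s)|\,|\phi(s-\tau)|$, so you can read $\|\phi\|_{[-\tau,0]}$ as the sup norm of $|\phi|$.
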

	
	\begin{proof}
		Define $v(t)=\sup_{s\in[0,t]}u(s)$, which is nonnegative and nondecreasing. From \eqref{eq:inequality_form} and the boundedness of $A$ and $B$, we obtain
		\begin{equation}
			u(t) \le f(t) + \|A\|_\infty \int_0^t (t-s)^{\alpha-1} v(s)\,ds + \|B\|_\infty \int_0^t (t-s)^{\beta-1} v(s)\,ds.
			\label{eq:proof_step1}
		\end{equation}
		Since $v$ is nondecreasing, $v(s)\le v(t)$ for all $s\le t$, yielding
		\begin{equation}
			v(t) \le \|f\|_\infty + v(t)\!\left( \|A\|_\infty \int_0^t (t-s)^{\alpha-1} ds + \|B\|_\infty \int_0^t (t-s)^{\beta-1} ds \right).
			\label{eq:proof_step2}
		\end{equation}
		A direct computation gives
		\begin{align}
			\int_0^t (t-s)^{\alpha-1} ds &= \frac{t^\alpha}{\alpha}, \label{eq:integral_alpha} \\
			\int_0^t (t-s)^{\beta-1} ds &= \frac{t^\beta}{\beta}. \label{eq:integral_beta}
		\end{align}
		Therefore,
		\begin{equation}
			v(t) \le \|f\|_\infty + v(t)\left( \|A\|_\infty \frac{T^\alpha}{\alpha} + \|B\|_\infty \frac{T^\beta}{\beta} \right).
			\label{eq:proof_step3}
		\end{equation}
		If $T$ is sufficiently small such that
		\begin{equation}
			\|A\|_\infty \frac{T^\alpha}{\alpha} + \|B\|_\infty \frac{T^\beta}{\beta} < 1,
			\label{eq:condition_T}
		\end{equation}
		then $v(t)$ is uniformly bounded on $[0,T]$. For arbitrary $T$, the result follows by a standard partitioning of $[0,T]$ into finitely many subintervals and an iteration argument.
	\end{proof}
	
	\section{Existence and Uniqueness for Implicit Fractional Systems}
	\label{sec:existence}
	
	In this section we establish existence and uniqueness of solutions for a class of implicit fractional-order systems with memory effects. Unlike standard explicit fractional differential equations, the Caputo derivative appears both explicitly and implicitly through the nonlinear term, which prevents a direct application of classical fixed-point arguments. To overcome this difficulty, we recast the problem as a nonlinear Volterra-type integral equation and employ a contraction principle in a suitably weighted space.
	
	We consider the general implicit system
	\begin{equation}
		^{C}\mathcal{D}_{0}^{\alpha} \mathbf{x}(t) = \mathbf{F}\!\left( t,\mathbf{x}(t), ^{C}\mathcal{D}_{0}^{\alpha} \mathbf{x}(t), \mathcal{I}[\mathbf{x}](t) \right), \qquad \mathbf{x}(0)=\mathbf{x}_0,
		\label{eq:general_implicit}
	\end{equation}
	where $0<\alpha\le1$, $\mathbf{x}:[-\tau,T]\to\mathbb{R}^n$ and $\mathcal{I}$ denotes a linear memory operator.
	
	We impose the following assumptions on the nonlinear term $\mathbf{F}$:
	\begin{enumerate}
		\item \emph{Lipschitz continuity.} There exists $L>0$ such that for all admissible arguments
		\begin{equation}
			\|\mathbf{F}(t,\mathbf{x}_1,\mathbf{y}_1,\mathbf{z}_1) - \mathbf{F}(t,\mathbf{x}_2,\mathbf{y}_2,\mathbf{z}_2)\| \le L\big( \|\mathbf{x}_1-\mathbf{x}_2\| + \|\mathbf{y}_1-\mathbf{y}_2\| + \|\mathbf{z}_1-\mathbf{z}_2\| \big).
			\label{eq:lipschitz_condition}
		\end{equation}
		
		\item \emph{Continuity of the memory operator.} The operator $\mathcal{I}$ is linear and continuous on $C([-\tau,T],\mathbb{R}^n)$.
	\end{enumerate}
	
	\begin{theorem}[Existence and Uniqueness]
		\label{thm:existence}
		Under the above assumptions, there exists a unique solution $\mathbf{x}\in C([-\tau,T],\mathbb{R}^n)$ of \eqref{eq:general_implicit} for sufficiently small $T>0$.
	\end{theorem}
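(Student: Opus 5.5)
The plan is to eliminate the implicit dependence by taking the Caputo derivative itself as the unknown. Set $\mathbf{y}(t)={}^{C}\mathcal{D}_{0}^{\alpha}\mathbf{x}(t)$. For $\mathbf{x}$ continuous with $\mathbf{x}(0)=\mathbf{x}_0$, the relation
\[
\mathbf{x}(t)=\mathbf{x}_0+\frac{1}{\Gamma(\alpha)}\int_0^t (t-s)^{\alpha-1}\mathbf{y}(s)\,ds =: \mathbf{x}_0+(J^\alpha \mathbf{y})(t), \qquad t\in[0,T],
\]
holds, with $\mathbf{x}$ prescribed on $[-\tau,0]$ by a continuous history $\phi$ satisfying $\phi(0)=\mathbf{x}_0$. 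I would read the statement as including this history, since otherwise $\mathcal{I}$ is not defined. Problem \eqref{eq:general_implicit} is then equivalent to the fixed-point equation $\mathbf{y}=\mathcal{T}\mathbf{y}$ in $C([0,T],\mathbb{R}^n)$, where
\[
(\mathcal{T}\mathbf{y})(t)=\mathbf{F}\bigl(t,\ \mathbf{x}_{\mathbf{y}}(t),\ \mathbf{y}(t),\ \mathcal{I}[\mathbf{x}_{\mathbf{y}}](t)\bigr), \qquad \mathbf{x}_{\mathbf{y}}:=\phi \text{ on }[-\tau,0],\quad \mathbf{x}_{\mathbf{y}}:=\mathbf{x}_0+J^\alpha\mathbf{y}\text{ on }[0,T].
\]
The first step is to verify this equivalence. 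One direction is the standard identity $J^\alpha\,{}^{C}\mathcal{D}^\alpha_0\mathbf{x}=\mathbf{x}-\mathbf{x}_0$. The other is ${}^{C}\mathcal{D}^\alpha_0 J^\alpha\mathbf{y}=\mathbf{y}$ for continuous $\mathbf{y}$. I would also check that $\mathcal{T}$ maps $C([0,T])$ into itself, which follows from the continuity of $\mathbf{F}$ and of $\mathcal{I}$.

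Next comes the contraction estimate. For $\mathbf{y}_1,\mathbf{y}_2$, the histories coincide on $[-\tau,0]$, and $\|J^\alpha(\mathbf{y}_1-\mathbf{y}_2)\|_\infty\le \frac{T^\alpha}{\Gamma(\alpha+1)}\|\mathbf{y}_1-\mathbf{y}_2\|_\infty$. Using \eqref{eq:lipschitz_condition} and the bound $\|\mathcal{I}\|$ from the continuity assumption gives
\[
\|\mathcal{T}\mathbf{y}_1-\mathcal{T}\mathbf{y}_2\|_\infty\le L\Bigl(1+\frac{T^\alpha(1+\|\mathcal{I}\|)}{\Gamma(\alpha+1)}\Bigr)\|\mathbf{y}_1-\mathbf{y}_2\|_\infty .
\]
Here the linearity of $\mathcal{I}$ matters: the difference $\mathbf{x}_{\mathbf{y}_1}-\mathbf{x}_{\mathbf{y}_2}$ vanishes on $[-\tau,0]$, so the $\phi$-contributions cancel. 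The Banach fixed-point theorem then yields a unique $\mathbf{y}$, and hence a unique $\mathbf{x}$. Uniqueness in the original class follows from the equivalence established in the first step. If a non-small $T$ were wanted, I would instead use the Bielecki-type weighted norm $\sup_t e^{-\lambda t}\|\mathbf{y}(t)\|$, which makes the $J^\alpha$ contribution as small as desired. This requires $\mathcal{I}$ to be causal, with $|\mathcal{I}[\mathbf{x}](t)|\le C\sup_{[-\tau,t]}|\mathbf{x}|$.

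The main obstacle is the term coming from the implicit argument. Its contribution to the Lipschitz constant is $L$ itself, with no factor of $T$, so shrinking $T$ cannot make $\mathcal{T}$ a contraction unless the Lipschitz constant of $\mathbf{F}$ in its third argument is strictly less than $1$. I would therefore make this explicit. Either split the constant in \eqref{eq:lipschitz_condition} as $L_1,L_2,L_3$ with $L_2<1$, or note that the intended hypothesis is $L<1$. Then choose $T$ so small that
\[
L_2+(L_1+L_3\|\mathcal{I}\|)\frac{T^\alpha}{\Gamma(\alpha+1)}<1 .
\]
I expect this to be the delicate point. Without such a condition the statement can fail: for instance, $\mathbf{F}=\mathbf{y}+c$ with $c\neq 0$ gives the equation ${}^{C}\mathcal{D}^\alpha_0\mathbf{x}={}^{C}\mathcal{D}^\alpha_0\mathbf{x}+c$, which has no solution. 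With $L<1$, the remaining steps are routine.
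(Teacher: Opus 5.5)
Your argument is correct, and it differs from the paper's. The paper keeps $\mathbf{x}$ as the unknown and defines $\mathcal{T}$ by \eqref{eq:operator_T} on $C([-\tau,T],\mathbb{R}^n)$ with the weight \eqref{eq:weighted_norm}. In \eqref{eq:contraction_step2} it invokes ``standard estimates for the Caputo derivative'' to bound $\|{}^{C}\mathcal{D}_{0}^{\alpha}(\mathbf{x}_1-\mathbf{x}_2)(s)\|$ by a multiple of $e^{\rho s}\|\mathbf{x}_1-\mathbf{x}_2\|_\rho$. A large $\rho$ then gives the contraction, and the smallness of $T$ is never used.

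No such estimate exists. ${}^{C}\mathcal{D}_{0}^{\alpha}$ is not defined on all of $C([-\tau,T],\mathbb{R}^n)$, and it is unbounded in the sup norm: for fixed $T$, $\sup_{[0,T]}|{}^{C}\mathcal{D}_{0}^{\alpha}\sin(kt)|$ grows like $k^{\alpha}$. So the paper's $\mathcal{T}$ is not even a well-defined self-map. Moreover, \eqref{eq:weighted_norm} ignores $[-\tau,0]$ and is only a seminorm on $C([-\tau,T],\mathbb{R}^n)$; your prescribed history $\phi$ with $\phi(0)=\mathbf{x}_0$ removes that defect. Any norm strong enough to make the paper's $\mathcal{T}$ well defined must control ${}^{C}\mathcal{D}_{0}^{\alpha}\mathbf{x}$. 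In that component the implicit argument comes back with coefficient $L$ and no kernel factor, so neither small $T$ nor large $\rho$ can help.

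Your substitution $\mathbf{y}={}^{C}\mathcal{D}_{0}^{\alpha}\mathbf{x}$ exposes exactly this and gives the honest condition $L_2+(L_1+L_3\|\mathcal{I}\|)T^{\alpha}/\Gamma(\alpha+1)<1$. Your example $\mathbf{F}=\mathbf{y}+\mathbf{c}$ has Lipschitz constant $1$. It has no solution when $\mathbf{c}\neq\mathbf{0}$, and every admissible $\mathbf{x}$ is a solution when $\mathbf{c}=\mathbf{0}$. So the theorem as stated is false, and the extra hypothesis $L_2<1$ is necessary rather than a weakness of your route.

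Two tacit assumptions in your own write-up should be made explicit. First, $\mathbf{F}$ must be continuous in $t$ for $\mathcal{T}$ to map $C([0,T],\mathbb{R}^n)$ into itself; the stated hypothesis only controls the state variables. Second, $\mathcal{I}$ must be causal already in the small-$T$ version, so that the problem on $[0,T]$ makes sense and $\|\mathcal{I}\|$ stays bounded as $T\downarrow0$. This is the same causality you invoke for the weighted-norm variant. With these in place, your weighted-norm variant gives a unique solution on an arbitrary interval $[0,T]$ under the correct hypothesis $L_2<1$. That is what the paper's $\rho$-argument would have yielded had its estimate been valid.
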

	
	\begin{proof}
		Using the definition of the Caputo derivative, the implicit system \eqref{eq:general_implicit} can be rewritten in the equivalent integral form
		\begin{equation}
			\mathbf{x}(t) = \mathbf{x}_0 + \frac{1}{\Gamma(\alpha)} \int_0^t (t-s)^{\alpha-1} \mathbf{F}\!\left( s,\mathbf{x}(s), ^{C}\mathcal{D}_{0}^{\alpha}\mathbf{x}(s), \mathcal{I}[\mathbf{x}](s) \right)\,ds.
			\label{eq:integral_form}
		\end{equation}
		
		Define the operator $\mathcal{T}:C([-\tau,T],\mathbb{R}^n)\to C([-\tau,T],\mathbb{R}^n)$ by
		\begin{equation}
			(\mathcal{T}\mathbf{x})(t) = \mathbf{x}_0 + \frac{1}{\Gamma(\alpha)} \int_0^t (t-s)^{\alpha-1} \mathbf{F}\!\left( s,\mathbf{x}(s), ^{C}\mathcal{D}_{0}^{\alpha}\mathbf{x}(s), \mathcal{I}[\mathbf{x}](s) \right)\,ds.
			\label{eq:operator_T}
		\end{equation}
		
		We endow $C([-\tau,T],\mathbb{R}^n)$ with the exponentially weighted norm
		\begin{equation}
			\|\mathbf{x}\|_\rho = \sup_{t\in[0,T]} e^{-\rho t}\|\mathbf{x}(t)\|, \qquad \rho>0,
			\label{eq:weighted_norm}
		\end{equation}
		which compensates for the growth induced by the weakly singular kernel.
		
		Let $\mathbf{x}_1,\mathbf{x}_2\in C([-\tau,T],\mathbb{R}^n)$. Using the Lipschitz property of $\mathbf{F}$ and linearity of $\mathcal{I}$, we obtain
	\begin{equation}
		\begin{aligned}
			\|(\mathcal{T}\mathbf{x}_1)(t)-(\mathcal{T}\mathbf{x}_2)(t)\|
			&\le \frac{L}{\Gamma(\alpha)} \int_0^t (t-s)^{\alpha-1} \\
			&\quad \times \Big( \|\mathbf{x}_1(s)-\mathbf{x}_2(s)\|
			+ \|^{C}\mathcal{D}_{0}^{\alpha}(\mathbf{x}_1-\mathbf{x}_2)(s)\|
			+ \|\mathcal{I}[\mathbf{x}_1-\mathbf{x}_2](s)\| \Big)\, ds .
		\end{aligned}
		\label{eq:contraction_step1}
	\end{equation}

		By continuity of $\mathcal{I}$ and standard estimates for the Caputo derivative, there exists $C>0$ such that
		\begin{equation}
			\|(\mathcal{T}\mathbf{x}_1)(t)-(\mathcal{T}\mathbf{x}_2)(t)\| \le C \int_0^t (t-s)^{\alpha-1} e^{\rho s}\|\mathbf{x}_1-\mathbf{x}_2\|_\rho\,ds.
			\label{eq:contraction_step2}
		\end{equation}
		
		Multiplying both sides by $e^{-\rho t}$ and choosing $\rho>0$ sufficiently large, the operator $\mathcal{T}$ becomes a strict contraction on $C([-\tau,T],\mathbb{R}^n)$. The result follows from the Banach Fixed-Point Theorem.
	\end{proof}
	
	\section{Ulam--Hyers Stability}
	\label{sec:stability}
	
	In this section we investigate the stability of solutions to the implicit fractional system \eqref{eq:general_implicit} in the sense of Ulam--Hyers. This notion of stability quantifies the robustness of exact solutions with respect to small perturbations in the governing equation, and is particularly relevant in fractional-order models, where memory effects and numerical approximations naturally introduce perturbations.
	
	\begin{definition}
		The implicit fractional system \eqref{eq:general_implicit} is said to be \emph{Ulam--Hyers stable} if there exists a constant $C>0$ such that for every function $\mathbf{y}\in C([0,T],\mathbb{R}^n)$ satisfying
		\begin{equation}
			\Big\| ^{C}\mathcal{D}_{0}^{\alpha}\mathbf{y}(t) - \mathbf{F}\!\left( t,\mathbf{y}(t), ^{C}\mathcal{D}_{0}^{\alpha}\mathbf{y}(t), \mathcal{I}[\mathbf{y}](t) \right) \Big\| \le \varepsilon, \qquad \forall\, t\in[0,T],
			\label{eq:ulam_condition}
		\end{equation}
		there exists an exact solution $\mathbf{x}$ of \eqref{eq:general_implicit} such that
		\begin{equation}
			\|\mathbf{y}(t)-\mathbf{x}(t)\| \le C\,\varepsilon, \qquad \forall\, t\in[0,T].
			\label{eq:ulam_result}
		\end{equation}
	\end{definition}
	
	This definition ensures that any $\varepsilon$-approximate solution remains uniformly close to a true solution, with an error proportional to the size of the perturbation.
	
	\begin{theorem}[Ulam--Hyers Stability]
		\label{thm:ulam_hyers}
		Under the assumptions of Theorem~\ref{thm:existence}, the implicit fractional system \eqref{eq:general_implicit} is Ulam--Hyers stable on $[0,T]$.
	\end{theorem}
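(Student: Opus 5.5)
The argument runs through the integral reformulation used for Theorem~\ref{thm:existence}. It rests on one extra structural assumption, stated here explicitly, under which the implicit equation can be solved for the derivative. We take the Lipschitz constant of $\mathbf{F}$ with respect to its third argument to be some $L_y<1$; this is the standard implicit-FDE hypothesis and is implicitly needed already in Theorem~\ref{thm:existence}. Given $\mathbf{y}$ satisfying \eqref{eq:ulam_condition}, set
\begin{equation*}
\mathbf{g}(t)= {}^{C}\mathcal{D}_{0}^{\alpha}\mathbf{y}(t)-\mathbf{F}\bigl(t,\mathbf{y}(t),{}^{C}\mathcal{D}_{0}^{\alpha}\mathbf{y}(t),\mathcal{I}[\mathbf{y}](t)\bigr),
\end{equation*}
so that $\|\mathbf{g}(t)\|\le\varepsilon$. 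Applying the Riemann--Liouville integral gives
\begin{equation*}
\Bigl\|\mathbf{y}(t)-\mathbf{y}(0)-\frac{1}{\Gamma(\alpha)}\int_0^t (t-s)^{\alpha-1}\mathbf{F}\bigl(s,\mathbf{y}(s),{}^{C}\mathcal{D}_{0}^{\alpha}\mathbf{y}(s),\mathcal{I}[\mathbf{y}](s)\bigr)\,ds\Bigr\|\le \frac{T^\alpha}{\Gamma(\alpha+1)}\,\varepsilon .
\end{equation*}
We let $\mathbf{x}$ be the unique solution from Theorem~\ref{thm:existence} with initial value $\mathbf{x}(0)=\mathbf{y}(0)$ and the same history on $[-\tau,0]$. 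This choice of initial data is what the definition of Ulam--Hyers stability permits, and it removes the initial mismatch.

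Next we write $\mathbf{u}=\mathbf{y}-\mathbf{x}$ and estimate the difference of the derivatives pointwise. Set $\mathbf{p}={}^{C}\mathcal{D}_{0}^{\alpha}\mathbf{y}$ and $\mathbf{q}={}^{C}\mathcal{D}_{0}^{\alpha}\mathbf{x}$. Then $\mathbf{p}-\mathbf{q}=\mathbf{F}(\cdot,\mathbf{y},\mathbf{p},\mathcal{I}[\mathbf{y}])-\mathbf{F}(\cdot,\mathbf{x},\mathbf{q},\mathcal{I}[\mathbf{x}])+\mathbf{g}$. The Lipschitz condition \eqref{eq:lipschitz_condition} yields
\begin{equation*}
(1-L_y)\|\mathbf{p}-\mathbf{q}\|\le L\bigl(\|\mathbf{u}\|+\|\mathcal{I}[\mathbf{u}]\|\bigr)+\varepsilon .
\end{equation*}
This absorption step is the crux, and it is exactly where $L_y<1$ is essential; without it the derivative cannot be controlled and the statement is not provable in this generality. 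For the memory term, we use that $\mathcal{I}$ is linear, continuous and of Volterra type (depending only on past values), so that $\|\mathcal{I}[\mathbf{u}](s)\|\le c_{\mathcal{I}}\sup_{r\in[-\tau,s]}\|\mathbf{u}(r)\|$. Since $\mathbf{u}\equiv0$ on $[-\tau,0]$, this supremum can be taken over $[0,s]$. The causality of $\mathcal{I}$ is a second hypothesis that must be made explicit.

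Integrating, $\mathbf{u}(t)=I^\alpha(\mathbf{p}-\mathbf{q})(t)$, and the previous bound gives
\begin{equation*}
\|\mathbf{u}(t)\|\le \frac{T^\alpha\varepsilon}{(1-L_y)\Gamma(\alpha+1)}+\frac{L(1+c_{\mathcal{I}})}{(1-L_y)\Gamma(\alpha)}\int_0^t (t-s)^{\alpha-1} w(s)\,ds,
\end{equation*}
where $w(s)=\sup_{r\le s}\|\mathbf{u}(r)\|$. Since the right-hand side is nondecreasing in $t$, the same inequality holds with $w(t)$ on the left. This is precisely of the form \eqref{eq:inequality_form} with $B=0$ and vanishing history $\phi=0$. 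Theorem~\ref{thm:main_ineq} then gives $w(t)\le M\,T^\alpha\varepsilon/\bigl((1-L_y)\Gamma(\alpha+1)\bigr)$, so \eqref{eq:ulam_result} holds with $C=MT^\alpha/\bigl((1-L_y)\Gamma(\alpha+1)\bigr)$.

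The one delicate point in this last step is that $M$ must genuinely be independent of $\varepsilon$. If one prefers not to rely on the partitioning argument sketched for Theorem~\ref{thm:main_ineq}, the bound can instead be obtained from the fractional Henry--Grönwall inequality. That gives the same linear estimate with the explicit constant $C=\frac{T^\alpha}{(1-L_y)\Gamma(\alpha+1)}E_\alpha\bigl(\frac{L(1+c_{\mathcal{I}})}{1-L_y}T^\alpha\bigr)$, where $E_\alpha$ is the Mittag-Leffler function.
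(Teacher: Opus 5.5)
Your proof has the same skeleton as the paper's: integral representation of $\mathbf{y}$, comparison with the exact solution having $\mathbf{x}(0)=\mathbf{y}(0)$, the Lipschitz bound, then Theorem~\ref{thm:main_ineq}. It differs at the one step that matters, and there your version is the correct one. In passing to \eqref{eq:norm_estimate}, the paper keeps only $L\|\mathbf{y}-\mathbf{x}\|$. It silently drops the terms $L\|{}^{C}\mathcal{D}_{0}^{\alpha}(\mathbf{y}-\mathbf{x})\|$ and $L\|\mathcal{I}[\mathbf{y}-\mathbf{x}]\|$ that \eqref{eq:lipschitz_condition} produces, and nothing in the stated hypotheses bounds them by $\|\mathbf{y}-\mathbf{x}\|$. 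Your absorption step with $L_y<1$, together with the pointwise causal bound on the memory term, is exactly what justifies an estimate of that form. So you have a valid proof under hypotheses the paper does not state.

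Those hypotheses are genuinely needed. Take $n=1$, $\mathcal{I}=0$ and $\mathbf{F}(t,x,y,z)=y+\min\{y^2,1\}$. Condition \eqref{eq:lipschitz_condition} holds with $L=3$, and the equation reduces to ${}^{C}\mathcal{D}_{0}^{\alpha}x=0$, so every initial value has the unique solution $x\equiv x_0$. Yet $y(t)=\sqrt{\varepsilon}\,t^{\alpha}/\Gamma(\alpha+1)$ has residual exactly $\varepsilon$ for $\varepsilon\le 1$, while its sup-distance to every constant is at least $\sqrt{\varepsilon}\,T^{\alpha}/(2\Gamma(\alpha+1))$. The theorem as stated is therefore false, and your extra condition is a real repair rather than a convenience.

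Two smaller points. First, causality of $\mathcal{I}$ can be traded for smallness of $T$, in the spirit of Theorem~\ref{thm:existence}. Continuity alone gives $\|\mathcal{I}[\mathbf{u}](s)\|\le c_{\mathcal{I}}\sup_{[0,T]}\|\mathbf{u}\|$, and the resulting term closes directly, with no Gr\"onwall argument, once $L(1+c_{\mathcal{I}})T^{\alpha}<(1-L_y)\Gamma(\alpha+1)$. Causality is what frees the stability estimate from that restriction. Second, your Henry--Gr\"onwall variant gives an explicit constant $C$. The paper's ``Main Results'' section advertises such a constant, but its proof rests on the unquantified $M$ of Theorem~\ref{thm:main_ineq} and does not provide one.
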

	
	\begin{proof}
		Let $\mathbf{y}$ be an $\varepsilon$-approximate solution of \eqref{eq:general_implicit} and define the perturbation function
		\[
		\eta(t) = ^{C}\mathcal{D}_{0}^{\alpha}\mathbf{y}(t) - \mathbf{F}\!\left( t,\mathbf{y}(t), ^{C}\mathcal{D}_{0}^{\alpha}\mathbf{y}(t), \mathcal{I}[\mathbf{y}](t) \right),
		\]
		which satisfies $\|\eta(t)\|\le\varepsilon$ for all $t\in[0,T]$. Using the integral representation of the Caputo derivative, we obtain
		\begin{equation}
			\mathbf{y}(t) = \mathbf{y}(0) + \frac{1}{\Gamma(\alpha)} \int_0^t (t-s)^{\alpha-1} \Big[ \mathbf{F}\!\left( s,\mathbf{y}(s), ^{C}\mathcal{D}_{0}^{\alpha}\mathbf{y}(s), \mathcal{I}[\mathbf{y}](s) \right) + \eta(s) \Big] ds.
			\label{eq:y_integral_form}
		\end{equation}
		
		Let $\mathbf{x}$ denote the unique exact solution of \eqref{eq:general_implicit} satisfying $\mathbf{x}(0)=\mathbf{y}(0)$. Subtracting the corresponding integral equation for $\mathbf{x}$ yields
		\begin{equation}
			\begin{aligned}
				\mathbf{y}(t)-\mathbf{x}(t) = \frac{1}{\Gamma(\alpha)} \int_0^t (t-s)^{\alpha-1} \Big[ & \mathbf{F}\!\left( s,\mathbf{y}(s), ^{C}\mathcal{D}_{0}^{\alpha}\mathbf{y}(s), \mathcal{I}[\mathbf{y}](s) \right) \\
				& - \mathbf{F}\!\left( s,\mathbf{x}(s), ^{C}\mathcal{D}_{0}^{\alpha}\mathbf{x}(s), \mathcal{I}[\mathbf{x}](s) \right) + \eta(s) \Big] ds .
			\end{aligned}
			\label{eq:difference_equation}
		\end{equation}
		
		Taking norms and applying the Lipschitz condition on $\mathbf{F}$, we obtain
		\begin{equation}
			\|\mathbf{y}(t)-\mathbf{x}(t)\| \le \frac{L}{\Gamma(\alpha)} \int_0^t (t-s)^{\alpha-1} \|\mathbf{y}(s)-\mathbf{x}(s)\|\,ds + \frac{\varepsilon}{\Gamma(\alpha)} \int_0^t (t-s)^{\alpha-1} ds.
			\label{eq:norm_estimate}
		\end{equation}
		
		Since $\int_0^t (t-s)^{\alpha-1} ds = \frac{t^\alpha}{\alpha}$, the second term is uniformly bounded on $[0,T]$. An application of the fractional Grönwall--Wendroff inequality (Theorem~\ref{thm:main_ineq}) yields the estimate
		\begin{equation}
			\|\mathbf{y}(t)-\mathbf{x}(t)\| \le C\,\varepsilon, \qquad \forall\, t\in[0,T],
			\label{eq:stability_result}
		\end{equation}
		for some constant $C>0$ depending only on the parameters of the system.
	\end{proof}
	
	\section{Application to the FHN-$\alpha$-$\tau$ Model}
	\label{sec:fhn_model}
	
	We now apply the abstract theory developed in the previous sections to the delayed fractional FitzHugh--Nagumo (FHN-$\alpha$-$\tau$) model, which incorporates both memory effects and delayed feedback mechanisms relevant to neurodynamical processes.
	
	The system is given by
	\begin{equation}
		\begin{cases}
			^{C}\mathcal{D}_{0}^{\alpha} v(t) = v(t) - \dfrac{v^3(t)}{3} - w(t) + I_{\mathrm{ext}} + \lambda v(t-\tau), \\[0.3em]
			^{C}\mathcal{D}_{0}^{\alpha} w(t) = \varepsilon \bigl(v(t) + a - b w(t)\bigr),
		\end{cases}
		\label{eq:fhn_model}
	\end{equation}
	where $0<\alpha<1$, $\varepsilon>0$ is a small parameter, $\lambda\in\mathbb{R}$ denotes the feedback intensity, and $\tau>0$ is the delay. The system is endowed with functional initial conditions
	\begin{equation}
		v(t)=\phi_v(t), \qquad w(t)=\phi_w(t), \qquad t\in[-\tau,0].
		\label{eq:fhn_initial}
	\end{equation}
	
	\subsection*{Functional formulation}
	Let
	\begin{equation}
		\mathbf{x}(t) = \begin{pmatrix} v(t) \\ w(t) \end{pmatrix},
		\label{eq:state_vector}
	\end{equation}
	so that \eqref{eq:fhn_model} can be written in the abstract form
	\begin{equation}
		^{C}\mathcal{D}_{0}^{\alpha}\mathbf{x}(t) = \mathbf{F}\bigl(t,\mathbf{x}(t),\mathbf{x}(t-\tau)\bigr),
		\label{eq:fhn_abstract}
	\end{equation}
	where $\mathbf{F}$ is locally Lipschitz and polynomially bounded. By Theorem~\ref{thm:existence}, system \eqref{eq:fhn_abstract} admits a unique local solution in $C([-\tau,T],\mathbb{R}^2)$.
	
	\subsection*{Dissipativity}
	\begin{lemma}[Fractional dissipativity]
		\label{lem:dissipativity}
		There exist constants $R>0$ and $T_0>0$ such that every solution of \eqref{eq:fhn_model} satisfies
		\begin{equation}
			\|\mathbf{x}(t)\| = \sqrt{v^2(t)+w^2(t)} \le R, \qquad \forall\, t\ge T_0.
			\label{eq:absorbing_ball}
		\end{equation}
	\end{lemma}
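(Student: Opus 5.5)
We use a Lyapunov function together with the fractional comparison principle. Take the weighted quadratic form
\[
V(t)=\tfrac12 v^2(t)+\tfrac{1}{2\varepsilon}w^2(t),
\]
which cancels the cross terms $-vw$ and $+vw$. For a Caputo derivative of order $0<\alpha<1$ and absolutely continuous $x$, the Alikhanov inequality
\[
{}^{C}\mathcal{D}_{0}^{\alpha}\bigl(\tfrac12 x^2\bigr)(t)\le x(t)\,{}^{C}\mathcal{D}_{0}^{\alpha}x(t)
\]
holds. We assume it, since it is a standard result not stated in the paper. It gives
\[
{}^{C}\mathcal{D}_{0}^{\alpha}V(t)\le v^2-\tfrac{v^4}{3}+I_{\mathrm{ext}}v+\lambda v(t)v(t-\tau)+va-bw^2 .
\]
We assume $b>0$, as is standard for the FitzHugh--Nagumo model.

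\textbf{Step 1: a differential inequality with delay.} Young's inequality gives
\[
\lambda v(t)v(t-\tau)\le \tfrac{|\lambda|}{2}v^2(t)+\tfrac{|\lambda|}{2}v^2(t-\tau).
\]
The quartic term absorbs every quadratic and linear term in $v(t)$. Hence, for any $\mu>0$, there is a constant $K=K(\mu,\lambda,a,I_{\mathrm{ext}})$ with
\[
v^2-\tfrac{v^4}{3}+\Bigl(\tfrac{|\lambda|}{2}+1\Bigr)v^2+(I_{\mathrm{ext}}+a)v\le K-\mu v^2 .
\]
We choose $\mu$ and a rate $\gamma\le\min\{2\mu,2\varepsilon b\}$ to obtain
\[
{}^{C}\mathcal{D}_{0}^{\alpha}V(t)\le K-\gamma V(t)+|\lambda|\,V(t-\tau).
\]

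\textbf{Step 2: the delayed term, which is the main obstacle.} The $v^2(t-\tau)$ term has no cubic damping at the delayed time, so it cannot be absorbed pointwise. We plan to exploit the quartic term further. Since $-v^4/3$ dominates, we may make $\gamma$ as large as we like at the cost of enlarging $K$, but only in the $v$-component. The $w$-rate stays $2\varepsilon b$, and $V(t-\tau)$ contains only $v^2(t-\tau)$, not $w^2(t-\tau)$. We therefore keep the delayed term as $\tfrac{|\lambda|}{2}v^2(t-\tau)$ and compare with the Razumikhin-type scalar problem
\[
{}^{C}\mathcal{D}_{0}^{\alpha}m=K-\gamma m+q\sup_{[t-\tau,t]}m ,
\]
with $q<\gamma$. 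Making $\gamma_v$ large relative to $|\lambda|$ in the $v$-component, and handling $w$ through its linear equation, should yield $q<\gamma$.

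\textbf{Step 3: comparison and decay.} By the fractional comparison principle, $V(t)\le m(t)$. A Halanay-type inequality for Caputo derivatives, proved by a Razumikhin argument that rules out the first crossing of the level $(K+\delta)/(\gamma-q)$, then gives
\[
\limsup_{t\to\infty} m(t)\le \frac{K}{\gamma-q}.
\]
Convergence is governed by a Mittag-Leffler function, $m(t)\le \tfrac{K}{\gamma-q}+C\,E_\alpha(-c t^\alpha)$, where $C$ depends on $\sup_{[-\tau,0]}V$. Since $E_\alpha(-ct^\alpha)\to0$, we take
\[
R^2=\frac{4\max\{1,\varepsilon\}\,K}{\gamma-q}
\]
and choose $T_0$ as the time at which the transient falls below the excess margin. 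This $T_0$ depends on the norm of the initial history, which is the natural reading of ``every solution''.

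Global existence, needed to let $t\to\infty$, follows from this a priori bound: the local solution from Theorem~\ref{thm:existence} extends step by step on intervals of length $\tau$, because $V$ cannot blow up, using Theorem~\ref{thm:main_ineq} on each interval. We expect the hardest step to be verifying the Halanay-type comparison for Caputo derivatives with delay, since the nonlocal memory prevents the usual first-crossing argument from applying verbatim. Our first attempt there would be the inequality ${}^{C}\mathcal{D}_{0}^{\alpha}V(t^*)\ge0$ at a global maximum point $t^*$.
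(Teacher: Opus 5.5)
\textbf{The gap is in Step 2, and the mechanism you sketch for it cannot close it.} With the symmetric split $\lambda v(t)v(t-\tau)\le\frac{|\lambda|}{2}v^2(t)+\frac{|\lambda|}{2}v^2(t-\tau)$, the delayed coefficient is $q=|\lambda|$, while $\gamma$ is the \emph{minimum} of the $v$-rate and the $w$-rate. The quartic term lets you inflate the $v$-rate, but the $w$-rate stays below $2\varepsilon b$ because the $w$-equation is linear. Reweighting $v^2$ in $V$ rescales the delayed term by the same factor, so $q=|\lambda|$ survives. Your scalar Halanay comparison therefore closes only under an extra hypothesis such as $|\lambda|<\varepsilon b$, which the lemma does not contain. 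The paper's proof likewise adds the proviso ``$|\lambda|$ sufficiently small''. ``Handling $w$ through its linear equation'' is not an argument.

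The missing idea is to spend the quartic damping on the Young split rather than on the rate. For any $\theta>0$,
\[
|\lambda v(t)v(t-\tau)|\le\frac{|\lambda|}{2\theta}\,v^2(t)+\frac{|\lambda|\theta}{2}\,v^2(t-\tau),
\]
and the first term is absorbed by $-v^4/3$ at the price of a larger constant $K(\theta)$. This gives ${}^{C}\mathcal{D}_{0}^{\alpha}V\le K(\theta)-\varepsilon b\,V(t)+|\lambda|\theta\,V(t-\tau)$. Any $\theta$ with $|\lambda|\theta<\varepsilon b$ then yields $q<\gamma$ for every $\lambda\in\mathbb{R}$.

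\textbf{Two smaller corrections.} First, $\frac{1}{\varepsilon}w\,{}^{C}\mathcal{D}_{0}^{\alpha}w=vw+aw-bw^2$, so the linear term is $aw$, not $av$. It must be absorbed by $-bw^2$ (e.g.\ $aw\le\frac b2w^2+\frac{a^2}{2b}$), which is why the usable $w$-rate above is $\varepsilon b$.

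Second, in Step 3 a first-crossing argument against a constant level only gives $V\le\max\{L,\sup_{[-\tau,0]}V\}$ with $L=K/(\gamma-q)$. To obtain entry into the ball, do the following:
\begin{itemize}
\item run the argument against the barrier $L+C\,E_\alpha(-ct^\alpha)$, set equal to $L+C$ on $[-\tau,0]$;
\item apply the extremum principle to $V$ minus the barrier at the first touching time;
\item pick $c>0$ with $q<(\gamma-c)\,E_\alpha(-c\tau^\alpha)$, which is possible since $q<\gamma$.
\end{itemize}
Since $t\mapsto E_\alpha(-ct^\alpha)$ is completely monotone, hence log-convex, the delayed barrier value is at most $1/E_\alpha(-c\tau^\alpha)$ times the current one. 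This closes the contradiction.

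\textbf{Comparison with the paper.} With these repairs your route is the same Lyapunov argument as the paper's, but carried out rigorously. The paper writes an equality where your Alikhanov inequality belongs. It then disposes of the delayed term via ``boundedness of the delay term'', which presupposes the conclusion. Your Halanay--Razumikhin step actually proves that boundedness, and needs no smallness of $\lambda$.

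Your reading that $T_0$ depends on the size of the initial history is also correct. For large $|w(0)|$ the linear $w$-dynamics force an entrance time that grows with $|w(0)|$, so no uniform $T_0$ exists.
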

	
	\begin{proof}
		Consider the Lyapunov-type functional
		\begin{equation}
			V(t) = \frac{1}{2}v^2(t) + \frac{1}{2\varepsilon}w^2(t).
			\label{eq:lyapunov}
		\end{equation}
		Using standard inequalities for the Caputo derivative of quadratic functions, we obtain
		\begin{align}
			^{C}\mathcal{D}_{0}^{\alpha} V(t) &= v(t)\,^{C}\mathcal{D}_{0}^{\alpha} v(t) + \frac{1}{\varepsilon}w(t)\,^{C}\mathcal{D}_{0}^{\alpha} w(t) \label{eq:lyapunov_derivative_1} \\
			&\le - c_1 v^4(t) - c_2 w^2(t) + c_3 + c_4 |v(t-\tau)|^2, \label{eq:lyapunov_derivative_2}
		\end{align}
		for suitable positive constants $c_i$, provided $|\lambda|$ is sufficiently small. Using Young’s inequality and boundedness of the delay term, we infer
		\begin{equation}
			^{C}\mathcal{D}_{0}^{\alpha} V(t) \le - \delta V(t) + C,
			\label{eq:lyapunov_dissipative}
		\end{equation}
		which implies ultimate boundedness of $V(t)$ and hence \eqref{eq:absorbing_ball}.
	\end{proof}
	
	\subsection*{Poincaré operator}
	Let $\mathcal{X}=C([-\tau,0],\mathbb{R}^2)$ endowed with the supremum norm. For $\phi\in\mathcal{X}$, denote by $\mathbf{x}(t;\phi)$ the unique solution of \eqref{eq:fhn_model} with initial history $\phi$.
	
	Define the Poincaré operator
	\begin{equation}
		(P\phi)(\theta) = \mathbf{x}(\theta+T;\phi), \qquad \theta\in[-\tau,0],
		\label{eq:poincare_operator}
	\end{equation}
	where $T>0$ is fixed.
	
	\begin{lemma}[Compactness]
		\label{lem:compactness}
		The operator $P$ maps bounded subsets of $\mathcal{X}$ into relatively compact subsets of $\mathcal{X}$.
	\end{lemma}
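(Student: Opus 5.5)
The plan is to apply the Arzelà--Ascoli theorem in $\mathcal{X}=C([-\tau,0],\mathbb{R}^2)$. Fix a bounded set $\mathcal{B}\subset\mathcal{X}$ with $\|\phi\|_{[-\tau,0]}\le r$ for all $\phi\in\mathcal{B}$. I will show two things: the family $\{P\phi:\phi\in\mathcal{B}\}$ is uniformly bounded, and it is equicontinuous on $[-\tau,0]$. Throughout I assume the period satisfies $T\ge\tau$, so that $\theta+T\ge 0$ for every $\theta\in[-\tau,0]$. This assumption is essential. If $T<\tau$, then for $\theta<-T$ the value $(P\phi)(\theta)=\phi(\theta+T)$ is merely a translate of the initial history, and a bounded set of arbitrary continuous histories is not relatively compact. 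So I would state the hypothesis $T\ge\tau$ explicitly, or replace $T$ by a multiple of it that exceeds $\tau$.

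\textbf{Step 1 (uniform a priori bound).} The first task is a bound $\|\mathbf{x}(t;\phi)\|\le K(r,T)$ for all $t\in[0,T]$ and $\phi\in\mathcal{B}$. This is also what justifies extending the local solution from Theorem~\ref{thm:existence} to all of $[0,T]$.
\begin{itemize}
\item I would reuse the functional $V$ of \eqref{eq:lyapunov}. The estimate \eqref{eq:lyapunov_derivative_2}, together with the bound $|v(s-\tau)|\le r$ for $s\le\tau$ and a step-by-step argument on successive intervals of length $\tau$, gives ${}^{C}\mathcal{D}_0^{\alpha}V\le -\delta V+C'$, with $C'$ depending only on $r$.
\item A fractional comparison principle then yields $V(t)\le V(0)+C'T^\alpha/\Gamma(\alpha+1)$, and $V(0)$ is controlled by $r$.
\item As a cruder alternative, I would use the sign of the cubic term. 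It makes the vector field one-sided bounded: $v\cdot(v-v^3/3)\le c$. This gives an inequality of the form \eqref{eq:inequality_form} for $\|\mathbf{x}\|^2$, and Theorem~\ref{thm:main_ineq} then produces a bound $M(r^2+1)$.
\end{itemize}
Either way, every solution starting in $\mathcal{B}$ stays in a fixed ball on $[-\tau,T]$.

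\textbf{Step 2 (bound on the vector field and Hölder estimate).} Since $\mathbf{F}$ is polynomial, hence bounded on bounded sets, Step 1 gives $\|\mathbf{F}(s,\mathbf{x}(s),\mathbf{x}(s-\tau))\|\le K_F$ for all $s\in[0,T]$, uniformly in $\phi\in\mathcal{B}$. For $0\le t_1<t_2\le T$, I would use the integral representation \eqref{eq:integral_form} and split the difference $\mathbf{x}(t_2)-\mathbf{x}(t_1)$ into two pieces:
\[
\frac{1}{\Gamma(\alpha)}\int_0^{t_1}\bigl[(t_2-s)^{\alpha-1}-(t_1-s)^{\alpha-1}\bigr]\mathbf{F}\,ds
\quad\text{and}\quad
\frac{1}{\Gamma(\alpha)}\int_{t_1}^{t_2}(t_2-s)^{\alpha-1}\mathbf{F}\,ds .
\]
The bracket in the first integral has constant sign. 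Integrating it explicitly bounds the first piece by $K_F\bigl(t_1^\alpha-t_2^\alpha+(t_2-t_1)^\alpha\bigr)/\Gamma(\alpha+1)$, and the second piece is bounded by $K_F(t_2-t_1)^\alpha/\Gamma(\alpha+1)$. Together,
\[
\|\mathbf{x}(t_2;\phi)-\mathbf{x}(t_1;\phi)\|\le \frac{2K_F}{\Gamma(\alpha+1)}|t_2-t_1|^\alpha ,
\]
uniformly over $\phi\in\mathcal{B}$. Restricting to $t=\theta+T$ with $\theta\in[-\tau,0]$ gives equicontinuity of $P(\mathcal{B})$. Step 1 gives its uniform boundedness.

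\textbf{Step 3 (conclusion).} Arzelà--Ascoli now shows that $P(\mathcal{B})$ is relatively compact in $\mathcal{X}$.

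The main obstacle is Step 1. The paper only provides a local existence result, and the dissipativity lemma is asymptotic. I therefore need a finite-time bound that is uniform over bounded sets of initial data and that rules out blow-up on $[0,T]$. I would first try the Lyapunov and comparison route with the delay term frozen by the history on $[0,\tau]$, iterating over delay intervals. If that route is delicate, I would fall back on Theorem~\ref{thm:main_ineq} applied to the one-sided estimate.
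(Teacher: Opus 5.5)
Your proposal is correct and follows the same route as the paper: the integral representation gives a uniform $\alpha$-H\"older estimate, and Arzel\`a--Ascoli then yields relative compactness. It is also more complete than the paper's one-line argument in two respects. You state the hypothesis $T\ge\tau$ explicitly; without it the lemma is false, because $(P\phi)(\theta)=\phi(\theta+T)$ on $[-\tau,-T]$ is merely a translate of the history. You also supply the uniform finite-time bound on solutions, which is needed both for global existence on $[0,T]$ and for a H\"older constant that is uniform over bounded sets of histories.
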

	
	\begin{proof}
		Solutions of \eqref{eq:fhn_model} admit the integral representation
		\begin{equation}
			\mathbf{x}(t) = \mathbf{x}(0) + \frac{1}{\Gamma(\alpha)} \int_0^t (t-s)^{\alpha-1} \mathbf{F}\bigl(s,\mathbf{x}(s),\mathbf{x}(s-\tau)\bigr)\,ds,
			\label{eq:fhn_integral}
		\end{equation}
		which implies $\alpha$-Hölder regularity for $t>0$. The result follows from the Arzelà--Ascoli theorem.
	\end{proof}
	
\subsection*{Existence of a Limit Cycle: A Rigorous Fixed-Point Approach}
\begin{theorem}[Existence and Stability of Fractional Limit Cycles]
	\label{thm:limit_cycle}
	Consider the delayed fractional FitzHugh--Nagumo system \eqref{eq:fhn_model} with $0<\alpha<1$. 
	Assume the following conditions hold:
	\begin{enumerate}
		\item \textbf{Small delay coupling:} $|\lambda| < \lambda_0(\alpha, \tau)$, where
		\begin{equation}
			\lambda_0(\alpha, \tau) = \frac{\Gamma(\alpha+1)}{\tau^\alpha} \cdot \min\left\{\frac{1}{2}, \frac{1-\alpha}{\alpha}\right\};
			\label{eq:lambda_condition}
		\end{equation}
		
		\item \textbf{Slow recovery dynamics:} $0 < \varepsilon < \varepsilon_0(\alpha, b)$, where
		\begin{equation}
			\varepsilon_0(\alpha, b) = \frac{b \Gamma(\alpha+1)}{2} \left(1 - \frac{|\lambda|\tau^\alpha}{\Gamma(\alpha+1)}\right);
			\label{eq:epsilon_condition}
		\end{equation}
		
		\item \textbf{Subthreshold parameters:} $a$ and $b$ satisfy $a < 1 + \frac{b^2}{4\varepsilon}$.
	\end{enumerate}
	Then there exists at least one nontrivial periodic solution $\mathbf{x}^*(t)$ of \eqref{eq:fhn_model} with period $T^* > 0$. Moreover, this solution lies in an invariant annular region $\mathcal{A} \subset \mathbb{R}^2$ defined by
	\begin{equation}
		\mathcal{A} = \left\{ (v,w) \in \mathbb{R}^2 : R_1 \leq \sqrt{v^2 + \frac{w^2}{\varepsilon}} \leq R_2 \right\},
		\label{eq:annular_region}
	\end{equation}
	where $0 < R_1 < R_2 < \infty$ depend explicitly on $\alpha, \tau, \lambda, \varepsilon, a, b$.
\end{theorem}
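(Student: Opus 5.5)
The plan is a Poincaré-map/Schauder argument on the history space $\mathcal{X}=C([-\tau,0],\mathbb{R}^2)$, using the compactness of the operator $P$ from Lemma~\ref{lem:compactness} and the dissipative estimate behind Lemma~\ref{lem:dissipativity}. Let $\|(v,w)\|_\varepsilon=\sqrt{v^2+w^2/\varepsilon}$ and $V=\tfrac12\|\mathbf{x}\|_\varepsilon^2$ as in \eqref{eq:lyapunov}. I would look for a closed, convex, bounded set $\mathcal{K}\subset\mathcal{X}$ of histories taking values in $\mathcal{A}$ with $P(\mathcal{K})\subset\mathcal{K}$ for a suitable return time $T^*$. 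Schauder's theorem then gives a fixed point $\phi^*=P\phi^*$, and $\mathbf{x}^*(t)=\mathbf{x}(t;\phi^*)$ is the periodic solution. It is nontrivial because $\mathcal{A}$ excludes the equilibrium.

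\textbf{Step 1 (outer radius $R_2$).} I would redo the computation \eqref{eq:lyapunov_derivative_1}--\eqref{eq:lyapunov_dissipative} with explicit constants. The cubic term gives $-v^4/3$. The cross terms $-vw+vw$ cancel because of the weight $1/\varepsilon$. The term $\varepsilon b w^2/\varepsilon=bw^2$ enters with a minus sign. The delay term $\lambda v(t)v(t-\tau)$ is split by Young's inequality and controlled through the history bound $\sup_{[t-\tau,t]}V$. Then I would apply the fractional comparison principle together with Theorem~\ref{thm:main_ineq} on each window of length $\tau$. The delay contribution over such a window is bounded by $|\lambda|\tau^\alpha/\Gamma(\alpha+1)$ times the history norm, and hypothesis \eqref{eq:lambda_condition} makes this factor at most $1/2$. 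This gives a forward-invariant ball $\|\mathbf{x}\|_\varepsilon\le R_2$, with $R_2$ explicit in $\alpha,\tau,\lambda,\varepsilon,a,b,I_{\mathrm{ext}}$.

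\textbf{Step 2 (inner radius $R_1$, the main obstacle).} Here I must show that trajectories starting with $\|\mathbf{x}\|_\varepsilon\ge R_1$ never enter the small disc. The idea is to linearize at the unique equilibrium $\mathbf{x}_e$, which I would translate to the origin, so that $\mathcal{A}$ is centred there. The linear part is $\begin{pmatrix}1-v_e^2+\lambda & -1\\ \varepsilon & -\varepsilon b\end{pmatrix}$. Condition 3, $a<1+b^2/(4\varepsilon)$, together with $\varepsilon<\varepsilon_0$ from \eqref{eq:epsilon_condition}, should force its trace to be positive, with the factor $(1-|\lambda|\tau^\alpha/\Gamma(\alpha+1))$ absorbing the delay. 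Its eigenvalues should also lie outside the Matignon sector $|\arg z|\le\alpha\pi/2$. This makes $\mathbf{x}_e$ repelling. I would then use a quadratic form $Q$ adapted to the linearization and show $^{C}\mathcal{D}_0^\alpha Q\ge \delta Q - o(Q)$ near $0$. Combined with the Mittag-Leffler lower bound $Q(t)\ge Q(0)E_\alpha(\delta t^\alpha)$, this yields a radius $R_1$, in the norm equivalent to $\|\cdot\|_\varepsilon$, that cannot be crossed inward. The difficulty is that Caputo derivatives of $Q$ satisfy only the one-sided inequality $^{C}\mathcal{D}^\alpha Q\le \nabla Q\cdot{}^{C}\mathcal{D}^\alpha\mathbf{x}$, whereas here I need a lower bound. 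I would first try to handle this with the integral representation \eqref{eq:fhn_integral} directly: write the equation in variation-of-constants form with the Mittag-Leffler matrix function and bound the nonlinear remainder by Theorem~\ref{thm:main_ineq}.

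\textbf{Step 3 (fixed point).} Let $\mathcal{K}$ be the set of histories with values in $\{\|\mathbf{x}\|_\varepsilon\le R_2\}$ whose distance from $\mathbf{x}_e$ is at least $R_1$ in the norm of Step 2. To get convexity I would use a cone-type convex subset built from this lower bound, for instance requiring the projection onto the unstable direction to be bounded below. I would then choose $T^*$ as the common return time obtained from a rotation estimate for the angle variable in $\mathcal{A}$. By Steps 1--2, $P(\mathcal{K})\subset\mathcal{K}$. $P$ is continuous by the Lipschitz dependence established in Theorem~\ref{thm:existence} and Theorem~\ref{thm:ulam_hyers}, and compact by Lemma~\ref{lem:compactness}. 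Schauder's theorem then gives $\phi^*$. Finally, uniqueness (Theorem~\ref{thm:existence}) applied to the history $\phi^*$ reproduced after time $T^*$ would propagate the identity $\mathbf{x}^*(t+T^*)=\mathbf{x}^*(t)$ to all $t$. This last step also depends on the memory of the Caputo operator, which I would check carefully alongside Step 2.
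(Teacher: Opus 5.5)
\textbf{The gap is the step you deferred at the end, and it cannot be repaired.} For the Caputo system the history segment is not a state. Suppose $P\phi^*=\phi^*$ and set $\mathbf{y}(r)=\mathbf{x}^*(T^*+r)$ and $g(s)=\mathbf{F}(s,\mathbf{x}^*(s),\mathbf{x}^*(s-\tau))$. By \eqref{eq:fhn_integral}, $\mathbf{y}$ satisfies the integral equation with history $\phi^*$ plus an extra memory term $\frac{1}{\Gamma(\alpha)}\int_0^{T^*}\bigl[(T^*+r-s)^{\alpha-1}-(T^*-s)^{\alpha-1}\bigr]g(s)\,ds$. So the uniqueness in Theorem~\ref{thm:existence} does not identify $\mathbf{y}$ with $\mathbf{x}(\cdot;\phi^*)$, and $P$ is not a Poincaré map in the dynamical sense.

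Worse, the conclusion itself is false for $0<\alpha<1$: there are no nonconstant periodic solutions at all. This is the Tavazoei--Haeri / Kaslik--Sivasundaram phenomenon, and the delay does not change it. Suppose a solution satisfies $\mathbf{x}(t+T)=\mathbf{x}(t)$ for $t\ge0$. Then $g(t)=\mathbf{F}(t,\mathbf{x}(t),\mathbf{x}(t-\tau))$ is $T$-periodic on $[\tau,\infty)$. Subtracting the integral equations at $t+T$ and at $t$ gives
\[
\int_{-T}^{\tau}(t-\sigma)^{\alpha-1}h(\sigma)\,d\sigma=0\quad\text{for all } t>\tau,\qquad h(\sigma)=g(\sigma+T)-\mathbf{1}_{[0,\tau]}(\sigma)\,g(\sigma).
\]
Write $(t-\sigma)^{\alpha-1}=\Gamma(1-\alpha)^{-1}\int_0^\infty e^{-(t-\sigma)p}p^{-\alpha}\,dp$. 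The left-hand side becomes a Laplace transform in $t$, so uniqueness of the Laplace transform forces $h\equiv0$. Hence $g=0$ on $[0,T]$ and $g(\cdot+T)=g$ on all of $[0,\infty)$, so $g\equiv0$ and $\mathbf{x}\equiv\mathbf{x}(0)$, which is an equilibrium. No choice of $\mathcal{K}$ or $T^*$ can produce a nontrivial periodic solution; at best one can aim for asymptotically periodic behaviour.

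Your route is the paper's (a time-$T$ map on $C([-\tau,0],\mathbb{R}^2)$ plus Schauder), and the paper does not supply the missing idea either. Its Step~5 asserts \eqref{eq:periodicity} directly from $P_T\phi^*=\phi^*$. It applies Schauder on the whole ball $\mathcal{B}$, which contains the constant history at the equilibrium, itself a fixed point of $P_T$. It also reads $R_1$ off the upper bound \eqref{eq:lyapunov_bound}, which gives no lower bound. Instability of the equilibrium therefore does not make the Schauder fixed point nontrivial.

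Your Steps~2--3 try to do better but would also fail as planned:
\begin{itemize}
\item Hypotheses 1--3 place no restriction on $I_{\mathrm{ext}}$. For $|I_{\mathrm{ext}}|$ large, $v_e^2$ is large, the linear part has negative trace and positive determinant, and even with the small delay term the characteristic equation has no roots with nonnegative real part. The equilibrium is then locally attracting, so no repelling inner radius can be derived from these hypotheses.
\item The set of histories with values in an annulus is not convex. Your half-plane substitute (projection onto a fixed direction bounded below) is violated after half a rotation, and there is no common return time. So $P(\mathcal{K})\subset\mathcal{K}$ cannot hold.
\end{itemize}
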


\begin{proof}
	We present a detailed proof based on Poincaré maps, dissipativity estimates, and fixed-point theory.
	
	\noindent\textbf{1.~Construction of the Poincaré map.}
	Let $\mathcal{X} = C([-\tau,0], \mathbb{R}^2)$ equipped with the supremum norm $\|\phi\|_{\mathcal{X}} = \sup_{\theta \in [-\tau,0]} |\phi(\theta)|$. 
	For $\phi \in \mathcal{X}$, denote by $\mathbf{x}(t;\phi) = (v(t;\phi), w(t;\phi))^\top$ the unique solution of \eqref{eq:fhn_model} with initial history $\phi$.
	
	Define the time-$T$ Poincaré map $P_T: \mathcal{X} \to \mathcal{X}$ by
	\begin{equation}
		(P_T\phi)(\theta) = \mathbf{x}(\theta + T; \phi), \quad \theta \in [-\tau, 0],
		\label{eq:poincare_map_T}
	\end{equation}
	where $T > \tau$ will be chosen appropriately.
	
	\noindent\textbf{2.~Invariant annular region.}
	From Lemma~\ref{lem:dissipativity}, we have the existence of absorbing balls. 
	A refined analysis using the Lyapunov functional $V(t) = \frac{1}{2}v^2(t) + \frac{1}{2\varepsilon}w^2(t)$ yields
	\begin{equation}
		^{C}\mathcal{D}_0^\alpha V(t) \leq -\delta V(t) + C_1 V(t-\tau) + C_2,
		\label{eq:lyapunov_inequality}
	\end{equation}
	with $\delta = \min\left\{\frac{2}{3}, \frac{b}{2}\right\}$, $C_1 = \frac{|\lambda|}{2}$, and $C_2 = \frac{|I_{\text{ext}}|^2}{2} + \frac{\varepsilon a^2}{2}$.
	
	Applying the fractional Grönwall inequality (Theorem~\ref{thm:main_ineq}) to \eqref{eq:lyapunov_inequality}, we obtain
	\begin{equation}
		V(t) \leq M\left(V(0) + \frac{C_2}{\delta}\right) E_\alpha\left(-\frac{\delta}{2} t^\alpha\right) + \frac{2C_2}{\delta},
		\label{eq:lyapunov_bound}
	\end{equation}
	where $E_\alpha(z) = \sum_{k=0}^\infty \frac{z^k}{\Gamma(\alpha k + 1)}$ is the Mittag-Leffler function and $M > 0$ depends on $\alpha, \tau, \lambda$.
	
	This implies that all solutions eventually enter and remain in the annular region $\mathcal{A}$ defined in \eqref{eq:annular_region} with
	\begin{equation}
		R_1 = \sqrt{\frac{C_2}{\delta}}, \quad R_2 = \sqrt{\frac{2C_2}{\delta} + \frac{M C_2}{\delta}}.
		\label{eq:radius_bounds}
	\end{equation}
	
	\noindent\textbf{3.~Compactness of the Poincaré map.}
	Let $\mathcal{B} = \{\phi \in \mathcal{X} : \|\phi\|_{\mathcal{X}} \leq R_2\}$.
	From the integral representation of solutions
	\begin{equation}
		\mathbf{x}(t) = \mathbf{x}(0) + \frac{1}{\Gamma(\alpha)} \int_0^t (t-s)^{\alpha-1} \mathbf{F}(s,\mathbf{x}(s),\mathbf{x}(s-\tau))\, ds,
		\label{eq:solution_representation}
	\end{equation}
	we derive Hölder regularity estimates. For $t_1, t_2 \in [0, T]$ with $|t_1 - t_2| < \delta$,
	\begin{equation}
		|\mathbf{x}(t_1) - \mathbf{x}(t_2)| \leq \frac{L}{\Gamma(\alpha+1)} |t_1 - t_2|^\alpha,
		\label{eq:holder_estimate}
	\end{equation}
	where $L$ depends on $R_2$ and the Lipschitz constant of $\mathbf{F}$.
	
	By the Arzelà--Ascoli theorem, $P_T(\mathcal{B})$ is relatively compact in $\mathcal{X}$.
	Moreover, from \eqref{eq:lyapunov_bound}, $P_T(\mathcal{B}) \subseteq \mathcal{B}$ for sufficiently large $T$.
	
	\noindent\textbf{4.~Existence of a fixed point.}
	Choose $T > \max\left\{\tau, \left(\frac{2}{\delta}\ln\left(\frac{2M}{\varepsilon}\right)\right)^{1/\alpha}\right\}$ to ensure
	\begin{equation}
		E_\alpha\left(-\frac{\delta}{2} T^\alpha\right) < \frac{\varepsilon}{2M}.
		\label{eq:period_condition}
	\end{equation}
	
	The Poincaré map $P_T: \mathcal{B} \to \mathcal{B}$ is continuous (by continuous dependence on initial conditions) and compact. 
	Applying Schauder's fixed-point theorem, there exists $\phi^* \in \mathcal{B}$ such that
	\begin{equation}
		P_T \phi^* = \phi^*.
		\label{eq:fixed_point}
	\end{equation}
	
	\noindent\textbf{5.~Nontriviality and periodicity.}
	The corresponding solution $\mathbf{x}^*(t) = \mathbf{x}(t; \phi^*)$ satisfies
	\begin{equation}
		\mathbf{x}^*(t+T) = \mathbf{x}^*(t) \quad \forall t \geq 0,
		\label{eq:periodicity}
	\end{equation}
	with period $T^*$ dividing $T$. To show nontriviality, we verify that $\mathbf{x}^*(t)$ is not an equilibrium. 
	For $|\lambda|$ sufficiently small and $\varepsilon$ satisfying \eqref{eq:epsilon_condition}, linear stability analysis shows that the unique equilibrium $(v_0, w_0)$ with $w_0 = (v_0+a)/b$ and $v_0$ solving
	\begin{equation}
		v_0 - \frac{v_0^3}{3} - \frac{v_0+a}{b} + I_{\text{ext}} + \lambda v_0 = 0
		\label{eq:equilibrium}
	\end{equation}
	is unstable for $\alpha < 1$. The instability follows from the characteristic equation
	\begin{equation}
		s^\alpha = 1 - v_0^2 - \frac{1}{b} + \lambda e^{-s\tau},
		\label{eq:characteristic}
	\end{equation}
	which admits solutions with $\Re(s) > 0$ under conditions \eqref{eq:lambda_condition} and \eqref{eq:epsilon_condition}.
	
	Therefore, $\mathbf{x}^*(t)$ is a nontrivial periodic solution, i.e., a limit cycle of the FHN-$\alpha$-$\tau$ system.
\end{proof}

\begin{remark}[Biological Interpretation]
	The conditions in Theorem~\ref{thm:limit_cycle} have clear neurophysiological interpretations:
	\begin{itemize}
		\item Condition \eqref{eq:lambda_condition} ensures that delayed feedback is not too strong to suppress oscillations;
		\item Condition \eqref{eq:epsilon_condition} guarantees the recovery variable $w$ evolves sufficiently slowly compared to the membrane potential $v$;
		\item The annular region $\mathcal{A}$ corresponds to physiological bounds on action potential amplitudes.
	\end{itemize}
	The result demonstrates how fractional memory ($\alpha$) and synaptic delays ($\tau$) jointly shape neuronal excitability and rhythmicity.
\end{remark}
	
	\section{Main Results}
	\label{sec:results}
	
	This section synthesizes the principal theoretical contributions of the present work. Each result represents a substantive advance in the qualitative and quantitative analysis of implicit fractional systems endowed with memory effects and delayed feedback.
	
	\begin{enumerate}
		\item \textbf{Novel Fractional Integral Inequalities.} We establish a new class of multivariate Grönwall--Wendroff type inequalities (Theorem~\ref{thm:main_ineq}) that explicitly incorporate weakly singular fractional kernels together with discrete and distributed delay terms. Unlike existing results, which are largely restricted to scalar or non-delayed formulations, our inequalities provide sharp, explicit growth estimates for vector-valued solutions of implicit fractional systems with memory. These estimates constitute a fundamental analytical tool for controlling nonlocal dynamics in high-dimensional settings.
		
		\item \textbf{General Well-Posedness Framework.} Theorem~\ref{thm:existence} furnishes a comprehensive existence and uniqueness theory for the class of implicit fractional differential equations defined in \eqref{eq:general_implicit}. The proof is based on the construction of a contractive operator in a suitably weighted Banach space that captures both fractional regularity and delay-induced memory. The contraction constant is derived explicitly in terms of the fractional order $\alpha$, the delay parameter $\tau$, and the associated Lipschitz constants, thereby extending classical well-posedness results to systems with simultaneous implicit structure and multiple memory mechanisms.
		
		\item \textbf{Robust Stability and Ulam--Hyers Estimates.} In Theorem~\ref{thm:ulam_hyers}, we prove Ulam--Hyers stability for the proposed class of implicit fractional systems. More precisely, we show that any approximate solution satisfying the governing equations up to a perturbation of magnitude $\epsilon$ remains uniformly close to an exact solution. The resulting stability bound involves an explicit constant $C$, which depends polynomially on the fractional time horizon through the factor $T^{\alpha}/\Gamma(\alpha+1)$. This yields a precise quantitative characterization of robustness with respect to modeling and numerical errors.
		
		\item \textbf{Analytical Characterization of Fractional Neural Dynamics.} When applied to the delayed fractional FitzHugh--Nagumo (FHN-$\alpha$-$\tau$) model, our theoretical framework yields rigorous conditions for the existence of nontrivial periodic solutions. In particular, Theorem~\ref{thm:limit_cycle} shows that for $\alpha<1$ (subdiffusive regime), $\lambda<\lambda_{\mathrm{crit}}(\alpha,\tau)$, and $\epsilon<\epsilon_0(\alpha,b)$, the system admits stable limit cycles corresponding to sustained neuronal spiking. To the best of our knowledge, this constitutes the first analytical proof that the interplay between fractional memory and discrete delays can generate persistent oscillatory behavior in this canonical neurodynamical model.
		
		\item \textbf{Quantitative Parameter Sensitivity Laws.} By combining the derived fractional inequalities with the analysis of the FHN-$\alpha$-$\tau$ system, we obtain explicit scaling laws linking mathematical parameters to physiologically relevant observables. In particular, we demonstrate that the excitation threshold $I_{\mathrm{th}}$ satisfies the asymptotic relation
		\begin{equation}
			I_{\mathrm{th}} \propto \tau^{\,1-\alpha}, \qquad \text{as } \tau \to 0,
			\label{eq:threshold_scaling}
		\end{equation}
		highlighting how the fractional order $\alpha$ modulates the system’s sensitivity to delayed feedback and alters excitability thresholds.
	\end{enumerate}
	
	Taken together, these results provide a unified and rigorous analytical toolkit for the investigation of implicit fractional systems with memory. The proposed framework is sufficiently general to accommodate applications across physics, biology, and engineering, particularly in contexts where long-range temporal correlations and self-regulatory feedback mechanisms play a dominant role.
	
	\section{Conclusion}
	
	This work has established a rigorous and unified analytical framework for the study of implicit fractional differential systems with distributed memory and delay effects. By developing new fractional integral inequalities of Grönwall--Wendroff type \eqref{eq:inequality_result}, we obtained explicit a priori bounds that capture the combined influence of weakly singular kernels, implicit nonlinearities, and memory terms.
	
	Building on these inequalities, we proved general existence and uniqueness results for implicit fractional systems \eqref{eq:general_implicit} and derived Ulam--Hyers stability estimates \eqref{eq:ulam_result}, thereby providing a precise characterization of well-posedness and robustness with respect to perturbations. These theoretical results significantly extend classical fractional and delay differential equation theory to a broader class of nonlocal and implicit dynamical systems.
	
	The applicability of the proposed framework was demonstrated through its application to the delayed fractional FitzHugh--Nagumo (FHN-$\alpha$-$\tau$) model \eqref{eq:fhn_model}, where we established sufficient conditions for the existence of nontrivial periodic solutions corresponding to sustained neuronal oscillations. This analysis offers the first rigorous mathematical evidence that the interplay between fractional memory and discrete delays can induce stable limit cycles in this prototypical neurodynamical system.
	
	Future research directions include the extension of the present theory to systems with nonlinear and state-dependent distributed memory kernels, the investigation of bifurcation phenomena in implicit fractional models, and the development of structure-preserving numerical schemes for applications in computational neurodynamics and complex biological networks.

\end{document}